\newtheorem{thm}{Theorem}[section]
\newtheorem{prop}{Proposition}[section]
\newtheorem{rem}{Remark}[section]
\newcommand{\R}{\mathbb{R}}             
\newcommand{\half}{\frac{1}{2}}
\newcommand{\Section}[1]{\section{#1} \setcounter{equation}{0}}
\begin{document}

\title{On non-uniqueness for the anisotropic Calder\'on problem with partial data}
\author{Thierry Daud\'e \footnote{Research supported by the JCJC French National Research Projects Horizons, No. ANR- ANR-16-CE40-0012-01} $^{\,1}$,
Niky Kamran \footnote{Research supported by NSERC grant RGPIN 105490-2011} $^{\,2}$ and Francois Nicoleau \footnote{Research supported by the French National Research Project GDR Dynqua} $^{\,3}$\\[12pt]
 $^1$  \small D\'epartement de Math\'ematiques. UMR CNRS 8088, Universit\'e de Cergy-Pontoise, \\
 \small 95302 Cergy-Pontoise, France. \\
\small Email: thierry.daude@u-cergy.fr \\
$^2$ \small Department of Mathematics and Statistics, McGill University,\\ \small  Montreal, QC, H3A 2K6, Canada. \\
\small Email: nkamran@math.mcgill.ca \\
$^3$  \small  Laboratoire de Math\'ematiques Jean Leray, UMR CNRS 6629, \\ \small 2 Rue de la Houssini\`ere BP 92208, F-44322 Nantes Cedex 03. \\
\small Email: francois.nicoleau@univ-nantes.fr }





\maketitle


\begin{abstract}
We show that there is non-uniqueness for the Calder\'on problem with partial data for Riemannian metrics with Hölder continuous coefficients in dimension greater or equal than three. We provide simple counterexamples in the case of cylindrical Riemannian manifolds with boundary having two ends. The coefficients of these metrics are smooth in the interior of the manifold and are only Hölder continuous of order $\rho <1$ at the end where the measurements are made. More precisely, we construct a toroidal ring $(M,g)$ which is not a warped product manifold, and we show that there exist in the conformal class of $g$ an infinite number of Riemannian metrics $\tilde{g} = c^4 g$ such that their corresponding partial Dirichlet-to-Neumann maps at one end coincide. The corresponding smooth conformal factors are harmonic with respect to the metric $g$ and do not satisfy the unique continuation principle.


\vspace{0.5cm}

\noindent \textit{Keywords}. Inverse problems, Anisotropic Calder\'on problem, Unique continuation principle.


\noindent \textit{2010 Mathematics Subject Classification}. Primaries 81U40, 35P25; Secondary 58J50.

\end{abstract}

\tableofcontents


\Section{Introduction} \label{0}

\subsection{The anisotropic Calder\'on problem with partial data}

The anisotropic Calder\'on problem on compact connected Riemannian manifolds with boundary is one of the most famous examples of inverse problems for an elliptic equation. The original problem that Calder\'on considered was whether one can recover the physical properties of a medium (like its electrical conductivity) by making only voltage and currents measurements at its boundary. This latter problem can be naturally formulated as a problem of geometric analysis in terms of the Dirichlet-to-Neumann map, (DN map in short), for the Laplace-Beltrami operator on Riemannian manifolds with boundary. We refer to the surveys \cite{GT2, KS2, Sa, U1} for a description of the current state of the art on the general anisotropic Calder\'on problem and also to \cite{ALP, AP, DSFKSU, DSFKLS, GSB, GT1, Is, IUY2, KKL, KKLM, KS1, KSU, LaTU, LaU, LeU} for important contributions to the question of uniqueness. On one hand, the uniqueness issue in the Calder\'on problem is still far from being completely understood in the case of smooth Riemannian manifolds of dimension greater or equal than $3$, and remains a major open problem. On the other hand, some counterexamples to uniqueness in the case in which the Dirichlet and Neumann data are measured on disjoint subsets of the boundary were found in our recent papers \cite{DKN2, DKN3, DKN4}.

\vspace{0.1cm}

The main goal of this paper is to give some non-uniqueness results for the anisotropic Calder\'on problem with partial data, (i.e in the case where the Dirichlet and Neumann measurements are made on the same open subset $\Gamma$ of the boundary), for a class of metrics whose coefficients are smooth in the interior of the manifold and H\"older continuous on the subset of the boundary where the measurements are made.

\vspace{0.1cm}
In order to state our main result, let us first recall the definition of the DN map (see for instance \cite{LeU} for the geometric formulation of the DN map for smooth Riemannian manifolds that we use here and \cite{AHG} for the formulation of the DN map corresponding to the original Calder\'on problem in terms of anisotropic conductivities with coefficients which are only $L^\infty$). Let $M$ be an $n$-dimensional smooth compact connected manifold with smooth boundary $\partial M$. We assume that this manifold $M$ is equipped with a Riemannian metric $g= (g_{ij})$ with measurable bounded coefficients satisfying (in local coordinates) the uniform ellipticity condition
\begin{equation}
\sum_{i,j} g^{ij}(x) \xi_i \xi_j \geq c |\xi|^2 \ {\rm{for \ a.e}} \  x \in M \ {\rm{and}} \ \xi \in \R^n,
\end{equation}
where the constant $c$ is strictly positive and where  $\left(g^{ij}\right)$ is the inverse of $(g_{ij})$.

On the Riemannian manifold $(M,g)$ we consider the Laplace-Beltrami operator $\Delta_{LB}$, given in local coordinates by
$$
\Delta_{LB}=  -\Delta_g = -\frac{1}{\sqrt{|g|}} \partial_i \left( \sqrt{|g|} g^{ij} \partial_j \right).
$$
where  $|g| = \det \left(g_{ij}\right)$ is the determinant of the metric tensor $(g_{ij})$, and where we use the Einstein summation convention.
It is well-known  that the Laplace-Beltrami operator $-\Delta_g$ with Dirichlet boundary conditions on $\partial M$ is self-adjoint on $L^2(M, dVol_g)$ and that $0$ is not an eigenvalue of $-\Delta_g$.

Let us consider the Dirichlet problem
\begin{equation} \label{Eq00}
  \left\{ \begin{array}{cc} -\Delta_g u = 0, & \textrm{on} \ M, \\ u = \psi, & \textrm{on} \ \partial M. \end{array} \right.
\end{equation}
A classical  result (see for instance \cite{AHG, GT, Sa, Ta1}) ensures that for any $\psi \in H^{1/2}(\partial M)$, there exists a unique weak solution $u \in H^1(M)$ of the Dirichlet problem (\ref{Eq00}). We recall that $u$ is a weak solution of (\ref{Eq00}) if
\begin{equation}\label{weaksolution}
\int_M \langle du, dw \rangle_g \ dVol_g =0 \ {\rm{for \ all}} \ w \in H_0^1(M),
\end{equation}
where $\langle du, dw \rangle_g$ is the pointwise scalar product of the one-forms $du,\, dv$ on $M$ induced by $g$ and given in local coordinates by
$\langle du, dw \rangle_g = g^{ij}\partial_i u \, \partial_j w$, and if the trace of $u$ on the boundary is equal to $\psi$. So, we can define the DN map as an operator $\Lambda_{g}$ from $H^{1/2}(\partial M)$ to $H^{-1/2}(\partial M)$ by
\begin{equation}
\left\langle \Lambda_{g} \psi | \phi \right \rangle = \int_M \langle du, dv \rangle_g \, dVol_g, \ {\rm{for \ all}} \ \psi, \ \phi \in H^{1/2}(\partial M),
\end{equation}
where $u$ is the unique weak solution of the Dirichlet problem (\ref{Eq00}), $v$ is any element of $H^1(M)$ such that $v_{|\partial M} = \phi$, and
$\left\langle \cdot  | \cdot  \right \rangle$ is the standard $L^2$ duality pairing between $ H^{1/2}(\partial M)$ and its dual.
In the case where the metric $g$ and the function $\psi$ are smooth, this definition coincides with the usual one
\begin{equation} \label{DN-Abstract}
\Lambda_{g} (\psi) = \left( \partial_\nu u \right)_{|\partial M},
\end{equation}
where  $\left( \partial_\nu u \right)_{|\partial M}$ is the normal derivative of $u$ with respect to the unit outer normal $\nu$ on $\partial M$.

\vspace{0.1cm}
As mentioned earlier, we are interested in the case in which the Dirichlet and Neumann data are measured on the same non-empty open subset $\Gamma$ of the boundary $\partial M$. Let us introduce the subspace of $H^{1/2}(\partial M)$ defined by:
\begin{equation}\label{sobolevlocal}
H_{co}^{1/2}(\Gamma) = \{ f \in H^{1/2} (\partial M) \ | \  supp f \subset \Gamma \}.
\end{equation}
The {\it{partial}} DN map is defined in a weak formulation as the operator $\Lambda_{g,\Gamma}$ such that
\begin{equation}\label{Partial-DNmap}
 \left\langle \Lambda_{g, \Gamma} (\psi) | \phi \right \rangle = \int_M \langle du, dv \rangle_g \, dVol_g, \quad \textrm{for all} \ \ \psi, \ \phi \in H_{co}^{1/2}(\Gamma),
\end{equation}
where $u$ is the unique weak solution of the Dirichlet problem (\ref{Eq00}), and where $v$ is any element of $H^1(M)$ such that $v_{|\partial M} = \phi$. As previously, for smooth metrics $g$ and smooth boundary data $\psi$, the partial DN map is simply given by:
\begin{equation}
  \Lambda_{g,\Gamma} (\psi) = \left( \partial_\nu u \right)_{|\Gamma}.
\end{equation}

\vspace{0.2cm}\noindent
In its simplest form, the anisotropic Calder\'on problem with partial data can be stated as follows:

\vspace{0.1cm}

\emph{If a pair of partial DN maps $\Lambda_{g_1,\Gamma}$ and $\Lambda_{g_2,\Gamma}$ coincide, is it true that the metrics $g_1$ and $g_2$ are the same}?

\vspace{0.1cm}\noindent
Because of several natural and geometric gauge invariances, the answer to the question stated above turns out to be negative. These lead to refined formulations of the Calder\'on problem that we shall present shortly, and that constitute the actual statement of this inverse problem. Indeed, it results from the definition (\ref{Eq00}) - (\ref{Partial-DNmap}) that the partial DN map $\Lambda_{g, \Gamma}$ is invariant when the metric $g$ is pulled back by any diffeomorphism of $M$ whose restriction to $\Gamma$ is the identity, \textit{i.e.}
\begin{equation} \label{Inv-Diff}
  \forall \phi \in \textrm{Diff}(M) \ \textrm{such that} \ \phi_{|\Gamma} = Id, \quad \Lambda_{\phi^*g, \Gamma} = \Lambda_{g, \Gamma}.
\end{equation}


In dimension two, there is another gauge invariance of the DN map due to the conformal invariance of the Laplace-Beltrami operator. More precisely, recall that in dimension $2$
$$
  \Delta_{cg} = \frac{1}{c} \Delta_g,
$$
for any smooth function $c >0$. Therefore, we have in dimension $2$
\begin{equation} \label{Inv-Conf}
  \forall c \in C^\infty(M), \ c >0 \ \textrm{and} \ c_{|\Gamma} = 1, \ \Lambda_{c g, \Gamma} = \Lambda_{g, \Gamma},
\end{equation}
since the unit outer normal vectors $\nu_{cg}$ and $\nu_g$ are identical on $\Gamma$.


\vspace{0.1cm}
It follows that the appropriate question (called the \emph{anisotropic Calder\'on conjecture with partial data}) to address is the following:
\vspace{0.1cm}

\emph{Let $M$ be a $n$-dimensional smooth compact connected manifold with smooth boundary $\partial M$. 
Let $g,\, \tilde{g}$ denote Riemannian metrics on $M$ with measurable bounded coefficients and let $\Gamma$ be an open subset of $\partial M$. Suppose that
$$
  \Lambda_{g,\Gamma} = \Lambda_{\tilde{g},\Gamma}.
$$
Does it follow that $g = \tilde{g} $  up to the gauge invariance (\ref{Inv-Diff}) if $n \geq 3$, and up to the gauge invariances (\ref{Inv-Diff})-(\ref{Inv-Conf}) in dimension $n=2$.
}\\

\vspace{0.1cm}

One may also consider a simpler inverse problem  by assuming that the Riemannian manifolds $(M,g)$ and $(M,\tilde{g})$ belong to the same conformal class, that is $\tilde{g} = c g$ 
for some positive smooth function c on $M$. In that case, the anisotropic Calder\'on problem amounts to the following statement \\

\emph{Let $M$ be a $n$-dimensional smooth compact connected manifold with smooth boundary $\partial M$. 
Let $g$ denotes a Riemannian metric on $M$ with measurable bounded coefficients and let $\Gamma$ be an open subset of $\partial M$. Let $c$ be a smooth positive function on $M$. If
$$
  \Lambda_{c g,\Gamma} = \Lambda_{g,\Gamma},
$$
then is it true that}
\begin{equation} \label{Inv-Conformal-1}
  c = 1, \quad \textrm{on} \ M?
\end{equation}
In fact, according to (\ref{Inv-Diff}), the assumption $\Lambda_{c g,\Gamma} = \Lambda_{g,\Gamma}$ should entail the question: does there exist a diffeomorphism 
$\phi: \, M \longrightarrow M$ with $\phi_{| \, \Gamma} = Id$ such that
\begin{equation} \label{Inv-Conformal}
  \phi^* g = c g?
\end{equation}
But, as was proved by Lionheart \cite{Li} {\it{for smooth metrics}}, any diffeomorphism $\phi: \, M \longrightarrow M$  which satisfies $\phi^* g = c g$ and $\phi_{|\Gamma} = Id$ 
for a non-empty open subset $\Gamma$ of $\partial M$ is the identity on the whole manifold $M$. Thus, the condition (\ref{Inv-Conformal}) may therefore be replaced by the condition (\ref{Inv-Conformal-1}).

\vspace{0.1cm}

Finally, there exists a last version of the anisotropic Calder\'on problem with partial data on $\Gamma$ involving an external potential. Consider the Dirichlet problem for the Schr\"odinger equation on $(M,g)$ with potential $V \in L^\infty(M)$
\begin{equation} \label{Eq0-Schrodinger}
  \left\{ \begin{array}{cc}
  (-\Delta_g + V) u = 0, & \textrm{on} \ M, \\ u = \psi, & \textrm{on} \ \partial M,
   \end{array} \right.
\end{equation}
where $\psi \in H_{co}^{1/2}(\Gamma)$. We assume that $0$ does not belong to the Dirichlet spectrum of $-\Delta_g +V$. Then, there exists a unique weak solution $u \in H^1(M)$ of (\ref{Eq0-Schrodinger}) (see for instance \cite{DSFKSU, Sa}). As previously, this allows us to define in the same way (\textit{i.e.} in a weak sense) the partial Dirichlet-to-Neumann map $\Lambda_{g, V, \Gamma} (\psi)$ for all $\psi \in H_{co}^{1/2}(\Gamma)$.

\vspace{0.1cm}

For \emph{smooth} Riemannian metrics $g$, it is well-known that there is a close connection between the anisotropic Calder\'on problem for Schr\"odinger operators and the anisotropic Calder\'on problem within the conformal class of a fixed metric $g$. It is based on the transformation law for the Laplace-Beltrami operator on a $n$-dimensional Riemannian manifold $(M,g)$ under conformal changes of the metric, that is
\begin{equation} \label{ConformalScaling}
  -\Delta_{c^4 g} u = c^{-(n+2)} \left( -\Delta_g + q_{g,c} \right) \left( c^{n-2} u \right),
\end{equation}
where the potential $q_{g,c}$ is given by
\begin{equation} \label{q}
  q_{g,c} = c^{-n+2} \Delta_{g} c^{n-2}.
\end{equation}

\vspace{0.1cm}\noindent
As a by product,  we get for instance the following result for \emph{smooth} metrics (see \cite{DKN3}, Proposition 1.1):

\begin{prop} \label{smoothgauge}
Let $\Gamma$ be any fixed open set of $\partial M$. Assume that $c$ is a smooth positive function on $M$ such that $c = 1$ on $\Gamma$ and $\partial_{\nu} c =0$ on $\Gamma$. Then
\begin{equation} \label{Link}
	\Lambda_{c^4 g, \Gamma} = \Lambda_{g, q_{g,c}, \Gamma}.
\end{equation}
In particular, if the potential $q_{g,c}=0$, \textit{i.e.} if the conformal factor $c$ satisfies additionally
\begin{equation} \label{Eq-Conf}
  \Delta_g c^{n-2} = 0, \ \textrm{on} \ M,
\end{equation}
we get immediately
$$
  \Lambda_{c^4 g, \Gamma} = \Lambda_{g, \Gamma}\,.
$$
\end{prop}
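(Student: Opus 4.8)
The plan is to transport the Dirichlet problem for the Laplace--Beltrami operator of the conformally rescaled metric $c^4 g$ to a Schr\"odinger problem for the base metric $g$ via the conformal scaling law (\ref{ConformalScaling}), and then to check that the two partial DN maps agree because the rescaling is trivial to first order along $\Gamma$. Fix $\psi \in H_{co}^{1/2}(\Gamma)$ and let $u \in H^1(M)$ be the unique weak solution of $-\Delta_{c^4 g} u = 0$, $u_{|\partial M} = \psi$, which exists since $0$ is not in the Dirichlet spectrum of $-\Delta_{c^4 g}$. Setting $w := c^{n-2} u$ and invoking (\ref{ConformalScaling}), the vanishing of $-\Delta_{c^4 g} u$ is equivalent, because $c$ is smooth and strictly positive, to $(-\Delta_g + q_{g,c}) w = 0$ on $M$. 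Thus $w$ solves the Schr\"odinger equation with potential $q_{g,c}$, and in particular $0$ is not in the Dirichlet spectrum of $-\Delta_g + q_{g,c}$.

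Next I would verify that $u$ and $w$ carry the same boundary data. On $\partial M$ we have $w_{|\partial M} = c^{n-2}\psi$. On $\Gamma$ the hypothesis $c=1$ gives $c^{n-2}\psi = \psi$, while on $\partial M \setminus \Gamma$ the support condition $\operatorname{supp}\psi \subset \Gamma$ forces $\psi = 0$ and hence $c^{n-2}\psi = 0 = \psi$. Therefore $w_{|\partial M} = \psi$, so $w$ is precisely the unique solution of the Schr\"odinger Dirichlet problem (\ref{Eq0-Schrodinger}) with boundary datum $\psi$.

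It remains to compare the normal derivatives on $\Gamma$, which is the heart of the argument and where both boundary hypotheses are used. Since $c^4 g$ is conformal to $g$, the respective outer unit normals satisfy $\nu_{c^4 g} = c^{-2}\nu_g$, so $\partial_{\nu_{c^4 g}} u = c^{-2}\partial_{\nu_g} u$; because $c=1$ on $\Gamma$ this reduces to $\partial_{\nu_{c^4 g}} u = \partial_{\nu_g} u$ there. On the other hand, Leibniz's rule gives
\[
\partial_{\nu_g} w = \big(\partial_{\nu_g} c^{n-2}\big)\, u + c^{n-2}\,\partial_{\nu_g} u ,
\]
and on $\Gamma$ the conditions $c=1$ and $\partial_\nu c = 0$ make the first term vanish and turn the second into $\partial_{\nu_g} u$, whence $\partial_{\nu_g} w = \partial_{\nu_{c^4 g}} u$ on $\Gamma$. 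Using the classical description (\ref{DN-Abstract}) of the partial DN maps for smooth metrics and smooth data, this reads $\Lambda_{g, q_{g,c}, \Gamma}(\psi) = \partial_{\nu_g} w_{|\Gamma} = \partial_{\nu_{c^4 g}} u_{|\Gamma} = \Lambda_{c^4 g, \Gamma}(\psi)$, and the general case follows by density of smooth data in $H_{co}^{1/2}(\Gamma)$ together with continuity of the DN maps. The final assertion is then immediate: if $\Delta_g c^{n-2} = 0$ then $q_{g,c} = c^{-n+2}\Delta_g c^{n-2} = 0$, so $-\Delta_g + q_{g,c} = -\Delta_g$ and $\Lambda_{g,q_{g,c},\Gamma} = \Lambda_{g,\Gamma}$.

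The main obstacle is the normal-derivative bookkeeping of the third step: one must check that the conformal weight $c^{-2}$ hidden in $\nu_{c^4 g}$ and the weight $c^{n-2}$ in $w = c^{n-2}u$ cancel \emph{exactly} on $\Gamma$, and this is precisely what the two hypotheses $c=1$ and $\partial_\nu c = 0$ guarantee, neither alone being sufficient. A cleaner coordinate-free alternative, which avoids the pointwise normal-derivative formula and is in fact the correct route for the merely H\"older-continuous metrics treated later, is to run the same computation inside the weak formulation (\ref{Partial-DNmap}) and its Schr\"odinger analogue: substituting $v = c^{n-2}\tilde v$ as a test function and expanding $\langle d(c^{n-2}u), d(c^{n-2}\tilde v)\rangle_g$ reproduces the Schr\"odinger bilinear form up to boundary terms that vanish on $\Gamma$ by the very same two conditions.
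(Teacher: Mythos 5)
Your proof is correct, but it takes a genuinely different route from the one written out in the paper. The paper states Proposition \ref{smoothgauge} as a byproduct of the transformation law (\ref{ConformalScaling}) and defers its proof to \cite{DKN3}; the proof the paper actually contains is for the weak extension, Proposition \ref{weakgauge}, and that argument never touches normal derivatives: it stays entirely inside the weak formulation (\ref{Partial-DNmap}), using the algebraic identity (\ref{wconformalLaplacian}),
$$
\int_M \langle du, dw \rangle_{c^4g} \, dVol_{c^4 g} = \int_M \langle d(c^{n-2}u), d(c^{n-2}w) \rangle_{g} \, dVol_{g} - \int_M \langle d(c^{n-2}), d(c^{n-2}uw) \rangle_{g} \, dVol_{g},
$$
and killing the second term with the integrated hypothesis (\ref{weakconformal0}), so that $v=c^{n-2}u$ is recognized as the weak solution for $g$ and the two bilinear forms coincide. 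Your argument instead transports the equation pointwise via (\ref{ConformalScaling}) and matches the classical normal derivatives (\ref{DN-Abstract}) on $\Gamma$, using $\nu_{c^4 g} = c^{-2}\nu_g$ together with the two boundary hypotheses. What your route buys is transparency in the smooth setting: one sees exactly how the weights $c^{-2}$ and $c^{n-2}$ cancel on $\Gamma$, and why neither hypothesis alone suffices. What the paper's route buys is robustness: since it avoids pointwise normal derivatives and elliptic regularity altogether, it survives when $g$ has only bounded measurable (H\"older) coefficients, which is precisely the setting the counterexamples require --- a point you correctly identify in your closing paragraph, where your ``coordinate-free alternative'' is essentially the paper's actual proof of Proposition \ref{weakgauge}. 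Two small points to tighten: (i) the assertion that $0$ is not in the Dirichlet spectrum of $-\Delta_g + q_{g,c}$ does not follow from the mere fact that your particular $w$ solves the equation; it follows from applying the correspondence $w \mapsto c^{-(n-2)}w$ to the \emph{homogeneous} Dirichlet problem, which maps it injectively into the Dirichlet problem for $-\Delta_{c^4 g}$, where $0$ is not an eigenvalue; (ii) in identifying the two DN maps as elements of $H^{-1/2}(\partial M)$ via (\ref{DN-Abstract}), you implicitly use that the boundary volume elements $d\sigma_{c^4 g} = c^{2(n-1)} d\sigma_g$ and $d\sigma_g$ agree on $\Gamma$, which again holds only because $c=1$ there.
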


\vspace{0.1cm}
Note that in dimension $n=2$, (\ref{Eq-Conf}) is automatically satisfied and (\ref{Link}) corresponds simply to the gauge invariance (\ref{Inv-Conformal}). In dimension $n \geq 3$ and for smooth metrics $g$, the unique continuation principle implies that $c=1$ on $M$ (remember that not only (\ref{Eq-Conf}) is satisfied, but $c$ must be identically $1$ on $\Gamma$ and $\partial_\nu c$ must be $0$ on $\Gamma$). Thus, if we want to use the result in Proposition \ref{smoothgauge} to obtain a counterexample to uniqueness for the anisotropic Calder\'on problem with partial data, we need to construct a metric $g$ in such a way the Laplace-Beltrami operator $-\Delta_g$ \emph{does not satisfy the unique continuation principle}.

\vspace{0.1cm}
We recall that, in dimension $n \geq 3$, the unique continuation principle holds for a uniformly elliptic operator on a domain $\Omega$ if the coefficients of the principal part of this operator are locally Lipschitz continuous, whereas in dimension $n=2$, the unique continuation principle holds if the coefficients of the principal part are $L^{\infty}$ (see for instance \cite{Hor1, Hor2, Tat}). Nevertheless, in dimension $n =3$, if the coefficients of the principal part are only \emph{Hölder continuous}, there exist examples of \emph{nonunique continuation}. The first such example was given in 1963 by Pli\'{s} \cite{Pli}, and later in 1972, a sharper counterexample was found by Miller \cite{Mil} for an elliptic equation in divergence form. This divergence form is very well adapted with our Riemannian setting. So, the main and basic idea of our paper is to construct a metric $g$ on a suitable manifold $M$ such that the Laplacian $\Delta_g$ is nothing but Miller's elliptic operator and the conformal factor $c$ is very close to Miller's solution.

\vspace{0.1cm}
But, before giving this construction, our first task is to slightly extend Proposition \ref{smoothgauge} for metrics $g$ having coefficients in $L^{\infty}(M)$ since, in this case, the potential $q_{g,c}$ only has a distributional sense. In other words, we have to write Proposition \ref{smoothgauge} (with $q_{g,c}=0$) in a weak sense. To do this, we remark that, for a smooth metric $g$ and for a smooth conformal factor $c$, the conditions $\partial_{\nu} c =0$ on $\Gamma$ and $\Delta_{g} c^{n-2}=0$ on $M$ are equivalent to
\begin{equation}\label{weakconformal}
 \int_M \langle d (c^{n-2}), d w \rangle_g \ dVol_g = 0 \ ,\ \forall w \in H^1 (M) \ {\rm{such \ that}} \ supp \ w_{|\partial M} \subset \Gamma,
\end{equation}
thanks to the Green's formula:
\begin{equation}\label{green}
\int_M \Delta_g (c^{n-2}) w  \ dVol_g + \int_M \langle d (c^{n-2}), d w \rangle_g \ dVol_g = \int_{\partial M} \partial_{\nu} c^{n-2} w \ d{\sigma}_{g}\,,
\end{equation}
where $dVol_g $ denotes the Riemannian volume element and $d{\sigma}_{g}$ denotes the volume element induced by $g$ on $\partial M$.
\vspace{0.2cm}

Now, we can state the following extension of Proposition \ref{smoothgauge} for {\it{metrics with bounded measurable coefficients}} which is one of the main arguments of our counterexamples for the anisotropic Calder\'on problem with partial data.

\begin{prop} \label{weakgauge}
Let $\Gamma$ be any fixed open set of $\partial M$. Assume that $c$ is a smooth positive function on $M$ such that $c = 1$ on $\Gamma$ and such that
\begin{equation}\label{weakconformal0}
 \int_M \langle d (c^{n-2}), d w \rangle_g \ dVol_g = 0 \ ,\ \forall w \in H^1 (M) \ {\rm{such \ that}} \ supp \ w_{|\partial M} \subset \Gamma.
\end{equation}
Then,
\begin{equation} \label{Link1}
	\Lambda_{c^4 g, \Gamma} = \Lambda_{g, \Gamma}.
\end{equation}
\end{prop}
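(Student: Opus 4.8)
The plan is to exhibit, for each Dirichlet datum $\psi \in H^{1/2}_{co}(\Gamma)$, an explicit $c^4 g$-harmonic extension built out of the $g$-harmonic one, and then to compare the two quadratic forms defining the partial DN maps. First I would record the conformal scaling identities in dimension $n$: writing $m = n-2$, one has $dVol_{c^4 g} = c^{2n}\, dVol_g$ and $\langle \alpha, \beta\rangle_{c^4 g} = c^{-4}\langle \alpha, \beta\rangle_g$ for one-forms, so that $\langle \alpha,\beta\rangle_{c^4 g}\, dVol_{c^4 g} = c^{2m}\langle \alpha,\beta\rangle_g\, dVol_g$. Since $c$ is smooth and strictly positive on the compact manifold $M$, the metric $c^4 g$ is again uniformly elliptic with bounded measurable coefficients, so its Dirichlet problem is well posed and its weak solution is unique.

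Second, let $u \in H^1(M)$ be the weak solution of (\ref{Eq00}) with datum $\psi$, and set $\tilde u = c^{-m} u$. Because $c \equiv 1$ on $\Gamma$ and $\mathrm{supp}\,\psi \subset \Gamma$, one checks $\tilde u_{|\partial M} = \psi$. I claim $\tilde u$ is the weak $c^4 g$-harmonic extension of $\psi$, i.e. $\int_M c^{2m}\langle d\tilde u, dw\rangle_g\, dVol_g = 0$ for all $w \in H^1_0(M)$. For $w \in C^\infty_0(M)$ (compactly supported in the interior), the pointwise identity $c^{2m}\, d(c^{-m}u) = c^m\,du - u\,d(c^m)$, together with an integration that shifts $c^m$ onto the test function using the $g$-harmonicity of $u$ against $c^m w \in H^1_0(M)$, collapses the integral to $-\int_M \langle d(uw), d(c^m)\rangle_g\, dVol_g$. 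Since $uw \in H^1(M)$ has zero boundary trace, the weak conformal hypothesis (\ref{weakconformal0}) (with $c^{n-2}=c^m$) makes this vanish. The functional $w \mapsto \int_M c^{2m}\langle d\tilde u, dw\rangle_g\, dVol_g$ is continuous on $H^1(M)$, so density of $C^\infty_0(M)$ in $H^1_0(M)$ upgrades the identity to all $w \in H^1_0(M)$, and uniqueness identifies $\tilde u$ with the weak solution for $c^4 g$.

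Third, I would compute the partial DN map for $c^4 g$. Fixing $\phi \in H^{1/2}_{co}(\Gamma)$ with an extension $v$, the independence of the DN pairing from the choice of extension (a consequence of harmonicity of $\tilde u$) lets me use $\tilde v = c^{-m} v$, so that $\langle \Lambda_{c^4 g,\Gamma}\psi\,|\,\phi\rangle = \int_M c^{2m}\langle d(c^{-m}u), d(c^{-m}v)\rangle_g\, dVol_g$. Expanding via $c^m\, d(c^{-m}u) = du - m c^{-1}u\,dc$ (and likewise for $v$) produces $\int_M \langle du, dv\rangle_g\, dVol_g$ plus three cross terms $A+B+C$ involving $\langle du, dc\rangle_g$, $\langle dc, dv\rangle_g$ and $\langle dc,dc\rangle_g$. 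The key observation is that applying (\ref{weakconformal0}) to the single test function $w = c^{-m}uv$, and expanding $d(c^{-m}uv)$, yields exactly $-(A+B+C)=0$. Hence $\langle \Lambda_{c^4 g,\Gamma}\psi\,|\,\phi\rangle = \int_M \langle du, dv\rangle_g\, dVol_g = \langle \Lambda_{g,\Gamma}\psi\,|\,\phi\rangle$.

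The main obstacle is the regularity needed to legitimately use $w = c^{-m}uv$ in (\ref{weakconformal0}): since $u$ and $v$ lie only in $H^1(M)$, their product need not belong to $H^1(M)$ when $n \geq 3$. I would circumvent this by first establishing the identity for $\phi \in C^\infty_0(\Gamma)$, taking a smooth (hence Lipschitz) extension $v$; then $c^{-m}uv \in H^1(M)$ because only the factor $u$ is rough, and its boundary trace is supported in $\mathrm{supp}\,\phi \subset \Gamma$, so (\ref{weakconformal0}) applies. Since both $\phi \mapsto \langle \Lambda_{c^4 g,\Gamma}\psi\,|\,\phi\rangle$ and $\phi \mapsto \langle \Lambda_{g,\Gamma}\psi\,|\,\phi\rangle$ are continuous on $H^{1/2}(\partial M)$ and $C^\infty_0(\Gamma)$ is dense in $H^{1/2}_{co}(\Gamma)$, the identity extends to all $\phi$, whence $\Lambda_{c^4 g,\Gamma} = \Lambda_{g,\Gamma}$. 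The same device, or interior De Giorgi--Nash--Moser bounds giving local boundedness of $u$, handles the analogous regularity point in the second step.
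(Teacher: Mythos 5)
Your proposal is correct and follows essentially the same route as the paper's proof: the identity you derive by expanding $c^{2m}\langle d(c^{-m}u),d(c^{-m}v)\rangle_g$ and cancelling the cross terms via the test function $c^{-m}uv$ is exactly the paper's key algebraic identity (\ref{wconformalLaplacian}), applied in the mirror-image direction (you transport the $g$-harmonic solution to a $c^4g$-harmonic one via $c^{-(n-2)}$, while the paper goes from $c^4g$ to $g$ via $c^{n-2}$). Your handling of the regularity of the product test function (smooth extension of $\phi \in C_0^\infty(\Gamma)$) and the concluding density argument also match the paper's proof.
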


\begin{proof}
 For any $\psi, \ \phi \in C_0^{\infty}(\Gamma)$, the partial DN map $\Lambda_{c^4g, \Gamma}$  is given by the relation:
 \begin{equation}\label{weakdn}
  \langle \Lambda_{c^4g, \Gamma} \psi \, | \, \phi > = \int_M \ \langle du, dw \rangle_{c^4g} \ dVol_{c^4 g},
 \end{equation}
where $u \in H^1 (M)$ is the unique weak solution of (\ref{Eq00}) associated to the metric $c^4 g$  with $u_{|\partial M} = \psi$ and $w \in C^{\infty}(M)$ is any extension of $\phi$. Note that the existence of such an extension is given for instance in the monograph \cite{Lee}, Corollary 6.27, together with the Tietze's extension theorem. As a consequence, the function $c^{n-2}uw \in H¹(M)$ and its trace on the boundary has its support in $\Gamma$. Now, a straightforward algebraic calculation gives:
\begin{eqnarray}\label{wconformalLaplacian}
 \int_M \ \langle du, dw \rangle_{c^4g} \ dVol_{c^4 g} &=&  \int_M \ \langle d(c^{n-2}u), d(c^{n-2}w) \rangle_{g} \ dVol_{g}  \nonumber \\
                                          & & -\int_M \ \langle d(c^{n-2}), d(c^{n-2}uw) \rangle_{g} \ dVol_{g}.
\end{eqnarray}
Thus thanks to the hypothesis (\ref{weakconformal}), we get:
\begin{equation}\label{eqinter}
 \langle \Lambda_{c^4g, \Gamma} \psi \, | \, \phi \rangle =  \int_M \ \langle d(c^{n-2}u), d(c^{n-2}w) \rangle_{g} \ dVol_{g}.
\end{equation}
Let us now prove that $v = c^{n-2} u$ is a weak solution of (\ref{Eq00}). Indeed, since $u$ is the unique weak solution of (\ref{Eq00}) associated to the metric $c^4 g$, we have for any $\varphi \in C_0^{\infty} (M)$,
\begin{equation}
\int_M \ \langle du, d\varphi \rangle_{c^4g} \ dVol_{c^4 g} =0.
\end{equation}
Using again the relation (\ref{wconformalLaplacian}) (with $w$ replaced by $\varphi$) and (\ref{weakconformal}), we get for any $\varphi \in C_0^{\infty} (M)$,
\begin{equation}
 \int_M \ \langle d(c^{n-2}u), d(c^{n-2}\varphi) \rangle_{g} \ dVol_{g} =0
\end{equation}
It follows that $v= c^{n-2}u$ is a weak solution of (\ref{Eq00}) for the metric $g$ which satisfies $v_{|\partial M} = \psi$ since $c =1$ on $\Gamma$.

Then, using the definition of the partial DN map $\Lambda_{g, \Gamma}$ and (\ref{eqinter}) again, we get immediately
\begin{equation}
 \langle \Lambda_{c^4g, \Gamma} \psi \, | \, \phi \rangle = \langle \Lambda_{g, \Gamma} \psi \, | \, \phi \rangle \ {\rm{for \ all}} \ \psi, \ \phi \in C_0^{\infty}(\Gamma).
\end{equation}
We conclude the proof by a standard density argument.

\end{proof}

\subsection{Statement of the main result}

Let us introduce first some notations. We consider the $n$-dimensional manifold
$$
  M= [0,1] \times T^{n-1},
$$		
where $T^{n-1}$ denotes the $(n-1)$-dimensional torus, ($n \geq 3$). This manifold has the topology of a cylinder. Using the standard toroidal coordinates, we 
can also interpret $M$ as a toroidal ring (see \cite{Gia}, Remark 2.5). Note that the boundary of $M$ is disconnected and consists in the disjoint union of 
two copies of $T^{n-1}$, (which we call {\it{ends}} in this paper), more precisely
$$
  \partial M = \Gamma_0 \cup \Gamma_1, \quad \Gamma_0 = \{0\} \times T^{n-1}, \quad \Gamma_1 = \{1\} \times T^{n-1}.
$$

\vspace{0.1cm}\noindent
Our main result is the following:

\begin{thm} \label{Main}
There exists a Riemannian metric $g$ on $M$ whose coefficients are smooth in $[0,1) \times T^{n-1}$ and Hölder continuous of order $\rho<1$ on the end $\Gamma_1$, 
and there exist an infinite number of smooth positive conformal factors $c$ which are not identical to $1$ on $M$, such that the following partial DN maps on $\Gamma_1$ are identical:
\begin{equation}
\Lambda_{c^4 g, \Gamma_1} = \Lambda_{g, \Gamma_1}.
\end{equation}
\end{thm}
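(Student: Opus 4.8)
The plan is to deduce everything from Proposition \ref{weakgauge} applied with $\Gamma = \Gamma_1$. By that proposition it suffices to produce a metric $g$ of the stated regularity together with infinitely many smooth positive functions $c \not\equiv 1$ satisfying $c = 1$ on $\Gamma_1$ and the weak condition (\ref{weakconformal0}); conclusion (\ref{Link1}) then follows immediately. The first step is to interpret (\ref{weakconformal0}) geometrically: writing $f := c^{n-2}$ and integrating by parts as in (\ref{green}), the condition says that $f$ is weakly $g$-harmonic in $M$ and has vanishing conormal derivative on $\Gamma_1$; combined with $f = 1$ on $\Gamma_1$, this means $u := f - 1$ is a \emph{nonzero} weakly $g$-harmonic function with vanishing Cauchy data on $\Gamma_1$. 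For a metric with Lipschitz coefficients the boundary unique continuation principle would force $u \equiv 0$, so the whole construction hinges on choosing $g$ for which UCP from $\Gamma_1$ \emph{fails}. Since smooth interior coefficients would likewise propagate the vanishing, the loss of regularity must sit precisely on $\Gamma_1$, which is exactly the regularity pattern asserted in the theorem.

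Next I would build $g$ from a divergence-form non-unique continuation example of Miller's type \cite{Mil}, transplanted onto the cylinder. Concretely, I would produce a symmetric uniformly elliptic matrix field $A = (a^{ij})$ on $M$, periodic in the $T^{n-1}$ variables, smooth on $[0,1)\times T^{n-1}$ and H\"older of order $\rho<1$ at $\Gamma_1$, together with a function $u$, smooth on $M$, with $\partial_i(a^{ij}\partial_j u) = 0$, $u\not\equiv 0$, and $u$ vanishing to infinite order (in particular with vanishing Cauchy data) at $\Gamma_1$, the singular locus of Miller's construction having been pushed onto the end $\Gamma_1$. I then define the metric by imposing $\sqrt{|g|}\,g^{ij} = a^{ij}$. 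Taking determinants gives $\det A = |g|^{(n-2)/2}$, hence
\[
  |g| = (\det A)^{2/(n-2)}, \qquad g^{ij} = (\det A)^{-1/(n-2)}\, a^{ij},
\]
which is a genuine positive-definite metric inheriting the regularity of $A$ (smooth in the interior, H\"older at $\Gamma_1$). By construction $-\Delta_g = |g|^{-1/2}\,\partial_i\big(a^{ij}\partial_j\,\cdot\,\big)$, so $u$ is weakly $g$-harmonic.

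For the conformal factors, I would set, for small $\epsilon$, $f_\epsilon := 1 + \epsilon u$ and $c_\epsilon := f_\epsilon^{1/(n-2)}$. Since $u$ is bounded on the compact manifold $M$, $f_\epsilon > 0$ for $|\epsilon|$ small, so $c_\epsilon$ is smooth and positive, $c_\epsilon \not\equiv 1$, and distinct small $\epsilon$ give distinct $c_\epsilon$, producing an infinite family. Because $d(c_\epsilon^{n-2}) = \epsilon\, du$ and $u$ has vanishing Cauchy data at $\Gamma_1$, integration by parts against any $w \in H^1(M)$ with $\operatorname{supp} w_{|\partial M}\subset \Gamma_1$ gives $\epsilon\int_M \langle du, dw\rangle_g\, dVol_g = \epsilon\int_{\Gamma_1}(\partial_\nu u)\,w\, d\sigma_g = 0$, so (\ref{weakconformal0}) holds, while $c_\epsilon = 1$ on $\Gamma_1$. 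Proposition \ref{weakgauge} then yields $\Lambda_{c_\epsilon^4 g,\Gamma_1} = \Lambda_{g,\Gamma_1}$ for every admissible $\epsilon$.

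The main obstacle is the second step: engineering Miller's counterexample on the cylinder with all the required structural features at once. One must keep the coefficients periodic in $x'$ and uniformly elliptic so that they descend to $T^{n-1}$ and genuinely define a metric; ensure the regularity drops to H\"older of order exactly $\rho$ on $\Gamma_1$ but remains smooth on all of $[0,1)\times T^{n-1}$; and arrange that $u$ is smooth with vanishing Cauchy data at $\Gamma_1$ yet nonzero in the interior. Verifying that the transplanted coefficients are H\"older but \emph{not} Lipschitz at $\Gamma_1$ — so that the failure of unique continuation is real and the construction is not vacuous — is the delicate analytic heart of the argument; one also checks that $0$ is not in the Dirichlet spectrum of $-\Delta_g$ so that the partial DN maps are well defined.
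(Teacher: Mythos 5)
Your proposal is correct and takes essentially the same route as the paper: it imports Miller's non-unique continuation example \cite{Mil} (periodic, hence living on the cylinder as observed in \cite{Gia}, with the singular set at $t=1$), defines the metric by $\sqrt{|g|}\,g^{ij}=a^{ij}$ -- which is exactly the paper's explicit metrics (\ref{Metric}) and (\ref{Metricn}) -- takes conformal factors $c_\epsilon=(1+\epsilon u)^{1/(n-2)}$, and concludes via Proposition \ref{weakgauge}. The only substantive difference is that the paper's proof additionally shows, by a volume comparison in powers of $\epsilon$, that $g$ and $c_\epsilon^4 g$ are not isometric under any diffeomorphism fixing $\Gamma_1$, a strengthening beyond the literal statement that your argument (correctly) does not need.
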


As we have said earlier, the proof of this theorem is rather simple and relies on Miller's famous counterexample to unique continuation for a uniformly elliptic equation in divergence form in dimension $3$ (see \cite{Mil}). Assume for a moment that the dimension of our manifold $M$ is $3$ and let us summarize the strategy of the proof. We consider first a metric $g$ on $M$ such that $\Delta_g$ is precisely the uniformly elliptic operator from Miller's construction. Note in passing that the elliptic operator from \cite{Mil} naturally \emph{lives} on the cylinder $M = [0,1] \times T^2$ as was noticed by Gianotti in \cite{Gia}. Then, using Miller's solution of this elliptic PDE, we shall construct in section 2 an {\it{infinite family}} of smooth conformal factors $c$ satisfying the assumptions of Proposition \ref{weakgauge}, i.e  $c = 1$ on $\Gamma_1$, and $\Delta_g c^{n-2}=0$, $\partial_{\nu} c=0$ on $\Gamma_1$ in the following weak sense:
\begin{equation}\label{localnature}
 \int_M \langle d (c^{n-2}), d w \rangle_g \ dVol_g = 0 \ ,\ \forall w \in H^1 (M) \ {\rm{such \ that}} \ supp \ w_{|\partial M} \subset \Gamma_1.
\end{equation}
This leads automatically to counterexamples to uniqueness for the Calder\'on problem with partial data in dimension $3$ since the metrics $g$ and $c^4g$ are not isometric (see the proof of Theorem \ref{Main}).
The proof in the case of higher dimensions is similar.

\begin{rem}
  It is important to stress that, even though two of the coefficients (namely $A_1 (t)$ and $A_3 (t)$) of the elliptic operator $\Delta_g$ are only Hölder continuous functions, $\Delta_g c^{n-2}$ is classically well-defined since these two functions are not differentiated with respect to $t$ in the expression of $\Delta_{g}$, (see \cite{Mil}, Theorem 1, or Proposition \ref{MillerCE} in this paper). In other words, the equation $\Delta_g c^{n-2}=0$ on $M$ can be also  understood in the {\it{classical sense}}.
\end{rem}
\begin{rem}
  All the derivatives of the conformal factors $c$ at the end $\Gamma_1$ are equal to zero as one would expect from boundary determination results (see \cite{KY}).
\end{rem}


\begin{rem}
We emphasize that this theorem is of a local nature. We cannot obtain with the same approach a counterexample for the anisotropic Calder\'on problem with global data, i.e when $\Gamma = \partial M$. Indeed, if the smooth conformal factor satisfies $c = 1$ on $\partial M$ and
\begin{equation}\label{localnature1}
 \int_M < d (c^{n-2}), d w >_g \ dVol_g = 0 \ ,\ \forall w \in H^1 (M),
\end{equation}
then, choosing $w = c^{n-2}$ in (\ref{localnature1}), we obtain immediately that $c$ is identical to $1$ on $M$. An alternative interpretation is to say that $0$ is not a Dirichlet eigenvalue of the Laplace-Beltrami operator $\Delta_g$.
\end{rem}

%
%

\subsection{A brief history of known results on the anisotropic Calder\'on problem}

In this last part of the Introduction, we give a brief and non-exhaustive survey of some of the most important known results on the anisotropic Calder\'on conjecture.

In dimension $2$, the anisotropic Calder\'on conjecture for global and partial data has been been settled positively for compact connected Riemannian surfaces in \cite{LaU, LeU}. We also refer to \cite{ALP, AP} for similar results for global and partial data in the case of anisotropic conductivities which are only $L^\infty$ on bounded domains of $\R^2$. In dimension $n \geq 3$, if the Riemannian manifold is real analytic, compact and connected, with real analytic boundary, a positive answer for global (i.e when $\Gamma = \partial M$), and partial data has been given in \cite{LeU,LaU, LaTU}. Similarly, the global anisotropic Calder\'on problem has been answered positively for compact connected Einstein manifolds with boundary in \cite{GSB}.

If the background metrics are not assumed to be analytic, the general anisotropic Calder\'on problem  in dimension $n\geq 3$  is still a difficult open problem, whether one is dealing with the case of full or partial data. Nevertheless, some important results have recently appeared in \cite{DSFKSU, DSFKLS, KS1}, for special classes of smooth compact connected admissible Riemannian manifolds with boundary. By definition, admissible manifolds $(M,g)$ are \emph{conformally transversally anisotropic},
$$
  M \subset \subset \R \times M_0, \quad g = c ( e \oplus g_0),
$$
where $(M_0,g_0)$ is an $n-1$ dimensional smooth compact connected Riemannian manifold with boundary, $e$ is the Euclidean metric on the real line and $c$ is a smooth strictly positive function in the cylinder $\R \times M_0$.  It has been shown in \cite{DSFKSU, DSFKLS} that for admissible manifolds and under some geometric assumptions on the transverse manifolds $M_0$ (see for instance \cite{DSFKLS} for a precise statement), the conformal factor $c$ is uniquely determined from the knowledge of the DN map. These results have been further extended to the case of partial data in \cite{KS1}. We also refer to \cite{GT1, Is, IUY1} for additional results in the  case of local data and to the surveys \cite{GT2, KS2} for further references.

\vspace{0.1cm}
Let us also  mention several papers dealing with the Calder\'on problem for \emph{more singular} metrics or conductivities in dimension $n \geq 3$. Haberman and Tataru \cite{HaTa} showed uniqueness in the global Calder\'on problem for uniformly elliptic isotropic conductivities that are Lipschitz and close to the identity. The latter condition was relaxed by Caro and Rogers in \cite{CaRo}. In dimensions $3$ and $4$, these results were slightly improved by Haberman in \cite{Ha} to the case of conductivities that belong to $W^{1,n}$. Related to the partial Calder\'on problem, Krupchyk and Uhlmann in \cite{KrUh1} proved that an isotropic conductivity with - roughly speaking - $\frac{3}{2}$ derivatives in the $L^2$ sense is uniquely determined by a DN map measured on possibly very small subset of the boundary.

\vspace{0.1cm}
Finally, we conclude this introduction mentioning the series of papers on \emph{invisibility} by Greenleaf, Kurylev, Lassas and Uhlmann (see \cite{GLU} for the original paper, \cite{ALP2} for an extension of this work in dimension $2$ and \cite{GKLU2, U2} for thorough surveys in this field). In these works, some counterexamples to uniqueness to the \emph{global} Calder\'on problem were described. These counterexamples are obtained for a class of metrics that are \emph{highly singular} at a given closed hypersurface lying within the manifold in the sense that the metric degenerates or blows up at this hypersurface. This is a situation in sharp contrast to the one obtained in the present work in which the metrics remain \emph{positive definite} and \emph{H\"older continuous} everywhere in the manifold, (and even smooth in the interior). Coming back to invisibility, we also refer to our recent paper \cite{DKN5} where similar non-uniqueness results were obtained for singular warped product metrics on the same class of manifolds $M$ as the ones used in this paper.

%
%

\section{Counterexamples to uniqueness} \label{3D}


\subsection{Miller's counterexample to unique continuation principle}

In this section, we recall the remarkable counterexamples obtained by Miller \cite{Mil} to the unique continuation principle, counterexamples that were slightly improved later by Mandache \cite{Man}. We say that a partial differential equation $P(x,D)u=0$ on a domain $\Omega$ possesses the {\it{unique continuation property}} if the equality $u=0$ in some ball within $\Omega$ implies the equality $u=0$ on $\Omega$.

\vspace{0.1cm}
\par
In dimension $n \geq 3$, the unique continuation property holds for a uniformly elliptic operator on a domain $\Omega$ if the coefficients of the principal part of this operator are locally Lipschitz continuous (see for instance \cite{Hor1, Hor2, Tat}). If the coefficients are only Hölder continuous, then there exist examples of \emph{nonunique continuation}. The first one was given in 1963 by Pli\'{s} \cite{Pli}. He considered a uniformly elliptic equation on a domain of $\R^3$ having the form:
\begin{equation}\label{Plis}
\sum_{i,j=1}^3 a_{ij}(x) \ \partial_{ij}^2 u + \sum_{i=1}^3 b_i (x)\  \partial_i u+ c(x) u =0,
\end{equation}
where the coefficients of this equation are Hölder continuous with order less than $1$. The coefficient of zero-order term $c(x)$  has no constant sign and might explain the existence of this counterexample. In 1972, a sharper counterexample (without zero-order term) was found by Miller \cite{Mil}. He constructed a smooth solution $u(t,x,y)$ of a uniformly elliptic equation in divergence form:
\begin{equation}\label{divergence}
div \  ( \mathcal{A} \ \nabla u) =0,
\end{equation}
where the $(3 \times 3)$ symmetric matrix $\mathcal{A}$ is given by
\begin{equation}\label{matricemiller}
\mathcal{A}=\left(
\begin{array}{c c c}
1 &0&0\\ 0&1+a_1 (t,x,y)+A_1 (t)&a_2(t,x,y)\\ 0&a_2(t,x,y)&1+a_3(t,x,y)+A_3(t)
\end{array} \right) .
\end{equation}
This  matrix $\mathcal{A}$ has its eigenvalues in $[\alpha, \alpha^{-1}]$ with elliptic constant $\alpha \in ]0,1[$. More precisely, Miller proved the following result:

\begin{thm}[Miller \cite{Mil}] \label{MillerCE}
There exists an example of nonunique continuation on the half-space $E= [0, +\infty[ \times \R^2$ for a uniformly elliptic equation
\begin{equation}\label{Millereq}
\partial_t^2 u + \partial_x((1+a_1+A_1) \partial_x u ) + \partial_x (a_2 \partial_y u) + \partial_y (a_2 \partial_x u) + \partial_y((1+a_3+A_3) \partial_y u )=0 \ {\rm{in}} \ E.
\end{equation}
\begin{enumerate}
\item The classical solution $u(t,x,y)$ is $C^{\infty}$ on $E$, identically zero for $t \geq T>0$, but not identically zero in any open subset of $[0,T[\times \R^2$.
\item The coefficients $a_1(t,x,y), \ a_2(t,x,y), \ a_3(t,x,y)$ are $C^{\infty}$ on $E$ and are identically zero for $t \geq T$.
\item The coefficients $A_1(t), \ A_3(t)$ are Hölder continuous of order $\frac{1}{6}$ on $[0,\infty[$, $C^{\infty}$ on $[0,T[$, and identically zero for $t \geq T$.
\item All functions $u, \ a_1, \ a_2, \ a_3$ are periodic in $x$ and $y$ with period $2\pi$.
\item Although the coefficient matrix $\mathcal{A}$ is only Hölder continuous at $t=T$, $u$ is a classical (as well as weak) solution of (\ref{Millereq}) on $E$.
\end{enumerate}
\end{thm}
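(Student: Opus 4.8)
The result is Miller's theorem, so I would follow the strategy of \cite{Mil}. The guiding idea behind all such counterexamples is to reverse the usual logic: instead of prescribing the coefficients and studying the solutions, one first \emph{prescribes} an oscillatory function $u$ that is flat (vanishing to infinite order) at $t=T$ but nonzero below it, and then exploits the large freedom in the coefficient matrix $\mathcal{A}$ to arrange that this $u$ is an exact solution of the divergence-form equation (\ref{Millereq}). Since (\ref{Millereq}) is a single scalar equation whereas $\mathcal{A}$ contains several free entries, prescribing $u$ leaves the coefficients heavily underdetermined, and this slack is what one spends to meet Properties~1--5.

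Concretely, I would fix an increasing sequence $t_j\uparrow T$ partitioning $[0,T)$ into intervals $I_j$ of length $\ell_j\to 0$, together with transverse frequencies $n_j\to\infty$, amplitudes $M_j\to 0$, and directions $\theta_j$ in the $(x,y)$-plane. On the block $I_j$ the solution is modelled on a single rotating harmonic, $u \approx M_j\,\phi_j(t)\cos\!\big(n_j(x\cos\theta_j+y\sin\theta_j)\big)$, with $\phi_j$ a profile matched to the neighbouring blocks. Because each slice then carries a nonzero trigonometric harmonic, $u$ cannot vanish on any open set (the ``not identically zero on any open subset'' half of Property~1), and $2\pi$-periodicity in $x,y$ is automatic (Property~4). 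The rotation $\theta_j\to\theta_{j+1}$ between successive blocks is the device that lets the frequency be changed continuously: the two $t$-only coefficients $A_1(t),A_3(t)$, which appear in (\ref{Millereq}) only as multipliers of $\partial_x^2u$ and $\partial_y^2 u$ (never differentiated in $t$, since they depend on $t$ alone), are tuned so that the effective transverse symbol $(1+A_1)\cos^2\theta+(1+A_3)\sin^2\theta$ tracks the decay rate $\phi_j''/(n_j^2\phi_j)$ forced by the profile, while the smooth coefficients $a_1,a_2,a_3$ absorb the $(x,y)$-dependent remainder and interpolate amplitude and direction across the joints. Throughout, the sequences must be kept in ranges that trap the eigenvalues of $\mathcal{A}$ inside a fixed interval $[\alpha,\alpha^{-1}]$, preserving uniform ellipticity.

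Two qualitative checks remain. First, regularity and flatness: imposing summability of $\sum_j M_j\,n_j^{k}\,\ell_j^{-m}$ for every $k,m$ --- achievable by letting $M_j$ decay fast relative to the growth of $n_j$ and the shrinking of $\ell_j$ --- makes $u\in C^\infty$ up to $t=T$ with all derivatives vanishing there, so the zero-extension for $t\geq T$ is a classical (and hence weak) solution on all of $E$ (Properties~1 and~5), with the smooth pieces $a_1,a_2,a_3$ inheriting smoothness and compact $t$-support (Property~2). Second, and this is the crux, the regularity of $A_1,A_3$: as $j$ increases these functions must change by an amount governed by $\phi_j''/(n_j^2\phi_j)\sim(n_j\ell_j)^{-2}$ across transition windows of width comparable to $\ell_j$, and demanding that the resulting function of $t$ be H\"older continuous forces an arithmetic relation among $\ell_j,n_j,M_j$. \textbf{The main obstacle is exactly this simultaneous bookkeeping}: the three sequences must be chosen so that infinite-order flatness at $T$, uniform ellipticity, smoothness of the $a_i$, and a \emph{definite} H\"older modulus for $A_1,A_3$ all hold at once; optimizing these competing constraints is what pins the exponent at $\tfrac16$. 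The one structural feature that makes the whole scheme consistent is that $A_1$ and $A_3$ are never differentiated in $t$: their mere H\"older continuity therefore produces no distributional term, which is precisely why $u$ is a genuine classical solution across the interfaces despite $\mathcal{A}$ failing to be Lipschitz at $t=T$.
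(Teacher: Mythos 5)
You should first note that the paper itself offers no proof of this statement: Theorem \ref{MillerCE} is quoted verbatim from Miller \cite{Mil} (with Giannotti's periodicity observation), so the only meaningful comparison is with Miller's original construction. Your sketch does capture its correct architecture -- prescribe a block-structured solution $u$ that is flat at $t=T$, reverse-engineer the coefficients, let $A_1,A_3$ carry the non-Lipschitz behaviour precisely because they are never differentiated in $t$, and keep ellipticity and flatness under control by bookkeeping on the sequences $\ell_j, n_j, M_j$. But the crux of the proof is exactly the step you dispose of with ``the smooth coefficients $a_1,a_2,a_3$ absorb the $(x,y)$-dependent remainder,'' and this is a genuine gap. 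Two concrete obstructions: first, your pure plane-wave/effective-ODE picture cannot implement the switching of harmonics, since a profile $\phi_j$ solving $\phi''=q(t)\phi$ with bounded $q$ cannot vanish identically beyond its block without vanishing everywhere (ODE uniqueness), so the transition windows necessarily involve genuinely $(x,y)$-dependent corrections; second, solving the single divergence-form constraint $\mathrm{div}(\mathcal{A}\nabla u)=-\partial_t^2 u$ for small, symmetric, periodic, smooth $a_i$ fails naively near the zeros of $\nabla_{x,y}u$, where one cannot divide by the gradient. Miller's device for handling both is structural and absent from your sketch: the solution \emph{alternates} between harmonics depending only on $x$ and only on $y$, so that $A_1(t)$ is varied only on intervals where $\partial_x u\equiv 0$ (and $A_3$ where $\partial_y u\equiv 0$), i.e.\ each $t$-only coefficient is changed precisely when it multiplies nothing; your rotating-direction scheme with the symbol $(1+A_1)\cos^2\theta+(1+A_3)\sin^2\theta$ replaces this by an argument that would force the two scalars $A_1,A_3$ to satisfy incompatible slice-by-slice constraints during overlaps, and moreover leaves $2\pi$-periodicity unaddressed (it requires integer wave vectors $(n_j\cos\theta_j, n_j\sin\theta_j)$, not arbitrary directions).

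A smaller but real inaccuracy: the exponent $\frac{1}{6}$ is \emph{not} what ``optimizing these competing constraints'' pins down. As the paper itself recalls (citing Miller's Remarks, p.~115, and Mandache \cite{Man}), the same scheme can be tuned to make $A_1, A_3$ H\"older continuous of any fixed order $\rho<1$; the value $\frac{1}{6}$ is merely the outcome of the particular parameter choices Miller made for simplicity, and presenting it as an optimum misrepresents the constraint structure. So: right family of ideas, correct identification of why classical solvability survives the H\"older breakdown at $t=T$ (the $A_i$ are never differentiated in $t$), but the transition-window construction -- the actual content of Miller's paper -- is asserted rather than proved, and the mechanism you propose in its place would not work as stated.
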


We emphasize that this theorem can be improved as follows: modifying slightly Miller's initial proof, the coefficients $A_1(t), \ A_3(t)$ can be actually constructed in such a 
way that they are Hölder continuous functions of fixed order $\rho <1$ , (see \cite{Mil}, Remarks p. 115), as it occurs in Mandache's paper \cite{Man}. But, we prefer to 
use Miller's work rather Mandache's paper \cite{Man} since it is not clear for us that the coefficients of the elliptic operator constructed by Mandache are smooth on $[0,T[$. 
It is also important to say again that this function $u$ is a classical solution of the elliptic equation (\ref{Millereq}) since we do not differentiate the Hölder functions 
$A_1(t)$ and $A_3(t)$ with respect to $t$ in the elliptic equation (\ref{Millereq}).

\vspace{0.1cm}

Moreover, since the solution $u(t,x,y)$ is periodic in $(x,y)$ with period $2\pi$, as was observed by C. Giannotti in \cite{Gia}, 
Miller's solution can be considered as a solution to an elliptic equation on the toroidal ring $M= [0,T]\times T^2$ where $T^2$ is the usual $2$-dimensional torus.
\vspace{0.1cm}

Note that, in the following section, and in order to simplify the notation, we assume (without loss of generality) that $T=1$.


\subsection{Construction of the counterexamples on a toroidal ring of $\R^3$} \label{Model-DNmap}

In this section, we consider a Riemannian manifold $(M,g)$, which has the topology of a cylinder $M = [0,1] \times T^2$. We denote
$(t,x,y)$  a global coordinate system on $M$. The manifold $M$ can be interpreted as a toroidal ring (see \cite{Gia}, Remark 2.5). We equip this manifold with the following Riemannian metric:

\begin{equation} \label{Metric}
  g = D dt^2+(1+a_3+A_3)dx^2-2a_2dxdy+(1+a_1+A_1)dy^2,
\end{equation}
where the coefficients $a_1,a_2,a_3,A_1,A_3$ are given by Theorem \ref{MillerCE}, and
\begin{equation}\label{determinant}
D = det \ \mathcal{A}= (1+A_1(t)+a_1(t,x,y)) (1+A_3(t)+a_3(t,x,y)) -a_2^2(t,x,y).
\end{equation}
Clearly,  $(M,g)$ is not a warped-product manifold and we have $\sqrt{|g|} \ (g^{-1}) = \mathcal{A}$. Note that this metric is well-defined on $M$ since all the coefficients are periodic in the 
variables $x,y$ with period $2\pi$. We recall that the boundary $\partial M$ of $M$ is disconnected and consists in the disjoint union of two copies of $T^2$,  precisely
$$
  \partial M = \Gamma_0 \cup \Gamma_1, \quad \Gamma_0 = \{0\} \times T^2, \quad \Gamma_1 = \{1\} \times T^2.
$$
We emphasize that our metric $g$ is smooth everywhere inside the manifold (precisely on $\bar{M} \setminus \Gamma_1$) and Hölder continuous of order $\rho<1$ on the end $\Gamma_1$. Thanks to Theorem \ref{MillerCE} and as it was observed before, Miller's solution is  a {\it{classical harmonic function}} for the Laplace Beltrami operator $\Delta_g$, i.e it satisfies the Laplace equation in the classical sense
\begin{equation}\label{harmonic}
\Delta_g u = 0 \ {\rm{in}} \ M.
\end{equation}
Moreover, since the solution $u(t,x,y)$ is smooth on $E$ and is vanishing for $t \geq 1$, all the derivatives of $u$ are also equal to zero at $t=1$. In particular, one has:
\begin{equation}\label{trace}
u_{|\Gamma_1} = 0 \ ,\ \partial_{\nu} u_{|\Gamma_1} = 0,
\end{equation}
where $\partial_{\nu}$ is the normal derivative on $\Gamma_1$ with respect to the unit outer normal vector $\nu$ on $\partial M$.

\vspace{0.5cm}\noindent
Now, let us define our infinite family of conformal factors $c(t,x,y)$. We set
\begin{equation}\label{conffactor}
c_{\epsilon}(t,x,y) =1 + \epsilon \ u(t,x,y),
\end{equation}
where $\epsilon >0$ is chosen sufficiently small to ensure that $c_{\epsilon}(t,x,y) \geq \half$ on the compact manifold $M$.

\begin{proof}[Proof of Theorem \ref{Main} in dimension $3$]
The conformal factors $c_{\epsilon}$ are smooth on $M$. Moreover, thanks to Theorem \ref{MillerCE}, they are not identically $1$ on $M$. Using (\ref{harmonic}) and (\ref{trace}), these conformal factors $c$ satisfy obviously and in the classical sense:
\begin{equation}\label{traceconf}
\Delta_g c_{\epsilon} = 0 \ {\rm{in}} \ M \ ,\ c_{\epsilon |\Gamma_1} = 1 \ ,\ \partial_{\nu} c_{\epsilon|\Gamma_1} = 0.
\end{equation}
Then, thanks to Green's formula, these conformal factors $c$ satisfy the assumptions of Proposition \ref{weakgauge}, and one obtains thus immediately:
\begin{equation}
\Lambda_{c_{\epsilon}^4 g, \Gamma_1} = \Lambda_{g, \Gamma_1}.
\end{equation}
Now assume that, for all $\epsilon>0$,  there exists a diffeomorphism (depending on $\epsilon$) $\phi: \, M \longrightarrow M$ such that $\phi_{|\Gamma_1} = Id$ and $\phi^* g = c_{\epsilon}^4g$. In particular, since $\phi$ is a diffeomorphism, we see that $Vol_g(M) = Vol_{\phi^*g}(M) = Vol_{c_{\epsilon}^4g}(M)$. Hence we must have
\begin{equation}\label{asympt}
  \int_M [ (1+\epsilon u)^6 -1]\ \sqrt{|g|} \ dx  = 0 \ {\rm{for \ all\ }} \epsilon >0.
\end{equation}
The term of order $2$ of this polynomial in the variable $\epsilon$ must be equal to $0$, i.e
\begin{equation}
\int_M u^2 \sqrt{|g|} \ dx  = 0,
\end{equation}
which is not possible since $u$ is not identically equal to zero. Thus, there exists an infinite number of conformal factors $c_{\epsilon}$ such that the metrics $g$ and $c_{\epsilon}^4 g$ are not isometric 
and we see that we have found counterexamples to uniqueness for the partial anisotropic Calder\'on problem in dimension $3$.
\end{proof}

\subsection{Generalization in the $n$-dimensional case} \label{nD}

The above construction can be generalized in a straightforward way to the $n$-dimensional case with $n \geq 3$. Indeed, let us consider the manifold $M= [0,1] \times T^{n-1}$ where $T^{n-1}$ stands for the $(n-1)$ dimensional torus. We denote $(t,x_1,x_2, ..., x_{n-1})$  a global coordinate system on $M$. As in the previous section, we introduce the coefficient
\begin{equation}\label{determinant1}
D(t, x_1, x_2) = (1+A_1(t)+a_1(t,x_1,x_2)) (1+A_3(t)+a_3(t,x_1,x_2)) -a_2^2(t,x_1,x_2),
\end{equation}
where the coefficients $a_1,a_2,a_3,A_1,A_3$ are still given by Theorem \ref{MillerCE} with the identifications $x_1 = x$ and $x_2 = y$. Now, we equip this manifold $M$ with the Riemannian metric
\begin{equation} \label{Metricn}
  g = D^{\frac{1}{n-2}} \ \left( dt^2+ D^{-1} \left( (1+A_3+a_3)dx_1^2 -2a_2 dx_1 dx_2+(1+A_1+a_1)dx_2^2 \right)  + \sum_{i=3}^{n-1} dx_i^2 \right).
\end{equation}
As before, the metric $g$ is smooth inside the manifold $M$ and only H\"older continuous at the end $\Gamma_1 = \{1\} \times T^{n-1}$. Then, we define the conformal 
factors (which will not depend on the variables $x_i$ for $i\geq 3$) by
\begin{equation}\label{conffactor1}
c_{\epsilon}(t,x_1, ...,x_{n-1}) =\left( 1 + \epsilon \ u(t,x_1,x_2)\right)^{\frac{1}{n-2}},
\end{equation}
where $u(t,x_1,x_2)$ is Miller's solution given in Theorem \ref{MillerCE} and $\epsilon >0$ is small enough to ensure that $c_{\epsilon} > 0$ on $M$. Using the same arguments as in the previous section, one has:
\begin{equation}
\Lambda_{c_{\epsilon}^4 g, \Gamma_1} = \Lambda_{g, \Gamma_1},
\end{equation}
which implies that there is non-uniqueness for the partial anisotropic Calder\'on problem in dimension $n \geq 3$.

\begin{rem}
In the previous non-uniqueness results for the anisotropic Calder\'on problem with partial data, we considered smooth compact connected cylindrical manifolds equipped with a  metric whose coefficients are only Hölder continuous, and having two ends. If we remove the assumption of smoothness for the manifold, then we can allow a connected boundary for $M$ and obtain probably counterexamples to uniqueness in the partial Calder\'on problem.

More precisely, let us consider the product manifold $M = [0,1] \times \overline{\Omega}$ where $\Omega$ is any connected bounded open set of $\R^{n-1}$ with smooth boundary. Note that the boundary of $M$ is now connected and given by:
$$
  \partial M = \left( \{0\} \times \overline{\Omega} \right) \cup \left( \{1\} \times \overline{\Omega} \right) \cup \left( (0,1) \times \partial \Omega \right).
$$
Clearly, we lose the smoothness of the manifold since $M$ has corners. Nevertheless, if the Dirichlet and the Neumann data are measured on  $\Gamma =\{1\} \times \overline{\Omega}$, one can probably use the previous constructions to get counterexamples in this new setting.
\end{rem}



\end{document}